\documentclass{amsart}
\usepackage[margin=1in]{geometry}
\usepackage{amsmath, amsfonts, amssymb, amsthm}
\usepackage{hyperref}
\usepackage{url}

\newtheorem{thm}{Theorem}
\newtheorem{lemma}[thm]{Lemma}

\newcommand{\R}{\mathbb R}
\newcommand{\eps}{\varepsilon}
\newcommand{\To}{\longrightarrow}
\newcommand{\dd}{\; \mathrm{d}}
\newcommand{\LL}{\mathcal{L}}

\title[Entropy production is not always monotone]{The entropy production is not always monotone\\ for hard spheres or for Maxwell molecules}

\author{Luis Silvestre}
\address{Mathematics Department, University of Chicago, Chicago, IL 60637, USA}
\email{luis@math.uchicago.edu}
\thanks{Luis Silvestre is supported by NSF grant DMS-2350263.}

\begin{document}

\begin{abstract}
In 1966, McKean asked whether the entropy production of the Boltzmann equation must be monotone decreasing in time. We show that this is not the case even in the space-homogeneous setting for 
the Boltzmann collision operator with a constant angular cross section and the kinetic parameter $\gamma \in [0,1]$. This recovers the classical case of hard spheres and the simplest case of Maxwell molecules.
Our examples are radially symmetric mixtures of Maxwellians.
\end{abstract}

\maketitle

\section{Introduction}
\label{s:setup}

In 1966, while studying the Kac caricature of a Maxwellian gas, McKean discovered that the Fisher information is monotone decreasing in time \cite{mckean1966speed}, and he conjectured that the entropy production should be monotone decreasing as well. He went further and suggested that all the successive derivatives of the entropy alternate in sign, a statement that became known as the \emph{super-$H$ theorem}. That conjecture attracted a lot of attention in the 1980s, but was eventually disproved at higher orders. Bobylev, and independently, Krook and Wu, found explicit solutions for Maxwell molecules \cite{bobylev1975exact,krook1976formation} whose derivatives were computed numerically and appeared to obey the expected signs
for roughly thirty orders \cite{ziff1981approach}; Olaussen then showed that the pattern actually fails at some high order, and Lieb gave another proof \cite{olaussen1982extension,lieb1982comment}. While these arguments prove that the super-$H$ theorem is false, they do not settle the question for the first few derivatives of the entropy. The first of these questions, which is in fact the first of McKean's conjectures in \cite{mckean1966speed}, is whether the entropy production is monotone decreasing in time. In a previous paper, the author proved it is not, for a collision kernel that is a singular measure concentrated on one relative speed and one scattering angle \cite{silvestre2026entropy}. This singular kernel allowed us to isolate the main culprit of the lack of monotonicity, but it left open the possibility that the physically motivated kernels behave better. In this paper we show that they do not. We present examples that show that the entropy production can fail to be decreasing for the space-homogeneous Boltzmann equation even in the case of Maxwell molecules or hard spheres.

We are interested in the space-homogeneous Boltzmann equation, in the form
\begin{equation} \label{e:boltzmann}
    \partial_t f = Q(f,f) ,
\end{equation}
where $Q(f,f)$ is the Boltzmann collision operator in 3D.
\[  Q(f,f) = \int_{\R^3} \int_{\mathbb{S}^2} (f' f'_* - f f_*)
    B(|v-v_*|, \cos \theta) \dd \sigma \dd v_* .\]

In this work, we will consider collision kernels $B$ that do not depend on the deviation angle $\theta$. That is, we set 
\begin{equation} \label{e:B}
    B = \frac{1}{4\pi} \, |v-v_*|^\gamma , \qquad \gamma \in [0,1] :
\end{equation}
the case $\gamma = 1$ corresponds to the hard-sphere model. The case $\gamma = 0$ is a Maxwell-molecule kernel with a constant angular cross section. We also consider the full range $\gamma \in [0,1]$ since the computation is essentially the same. The factor $1/(4\pi)$ is a normalization that does not affect the monotonicity result.

We analyze functions $f$ that solve \eqref{e:boltzmann}, are smooth, strictly positive, radial and rapidly decreasing. We construct them with unit mass. Their temperature is not set, but it can be adjusted by scaling a posteriori.

Given any function $f = f(t,v)$ solving the space-homogeneous Boltzmann equation, the entropy is monotone increasing in time. The entropy and the entropy production are computed directly by the formulas
\[ H(f) = -\int_{\R^3} f \log f \dd v, \qquad
   D(f) = \partial_t H(f) = -\int_{\R^3} Q(f,f) \log f \dd v \geq 0 . \]

The question we address in this note is whether the entropy production is monotone decreasing in time. That is, if $f$ solves \eqref{e:boltzmann}, whether
   \begin{equation} \label{e:question}
    \partial_t D(f) \leq 0 \, .
\end{equation}
We show that it is not always the case.

\begin{thm} \label{t:main}
Let $\gamma \in [0,1]$, let $p \in (0,p_\gamma)$, and for $R \geq 2$ let
$f_R = (1-p) M_1 + p M_R$, where $M_T(v) = (2\pi T)^{-3/2} e^{-|v|^2/(2T)}$.
Then, as $R \to \infty$,
\begin{equation} \label{e:thm}
    \partial_t D(f_R) = c_{p,\gamma} \, R^\gamma \log R
    + o_p \big( R^\gamma \log R \big) .
\end{equation}
In particular $\partial_t D(f_R) > 0$ for all $R$ large enough whenever
$0 < p < p_\gamma$. Here, $p_\gamma$ and $c_{p,\gamma}$ are positive numbers that can be computed explicitly.
\end{thm}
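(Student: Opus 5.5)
We plan to compute $\partial_t D(f)$ explicitly at $f=f_R$, using that $\partial_t f = Q(f,f)$, and then extract the leading asymptotics as $R\to\infty$. Differentiating $D(f) = -\int Q(f,f)\log f$ along the flow gives
\[
\partial_t D(f) = -2\int_{\R^3} Q(f,Q(f,f))\log f \dd v \;-\; \int_{\R^3} \frac{Q(f,f)^2}{f}\dd v ,
\]
where we use the symmetrized bilinear form $Q(f,g)$, with the first term coming from $\partial_t Q(f,f) = 2Q(f,\partial_t f)$ (the factor $2$ being the symmetric bilinear derivative) and the second from $\partial_t \log f = Q(f,f)/f$. The second term is always nonpositive, so the task is to show that the first term is positive and dominates it, with size $R^\gamma\log R$.

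\textbf{Step 1: the structure of $Q(f_R,f_R)$.} Since $Q(M_1,M_1)=Q(M_R,M_R)=0$, we have $Q(f_R,f_R) = 2p(1-p)\,Q(M_1,M_R)$ (symmetrized). We will compute $Q(M_1,M_R)$ asymptotically. For $v$ in the bulk of $M_1$ ($|v|=O(1)$), collision partners $v_*$ have $|v_*|\sim\sqrt R$, so $|v-v_*|^\gamma\sim R^{\gamma/2}$. Post-collisional velocities are then spread over a sphere of radius $\sim\sqrt R$. Hence the loss term dominates near the origin, giving $Q(M_1,M_R)\approx -c R^{\gamma/2}M_1$ there. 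The gain term produces a profile at intermediate scales. For a constant angular kernel this profile is computable: it is a superposition of Maxwellian-like densities of temperature $\approx (1+R)/2$ and similar. We will write $Q(f_R,f_R)$ as $-A M_1 + $(mass transferred to velocities of order $\sqrt R$), up to lower-order errors.

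\textbf{Step 2: the source of $\log R$.} The key point is that $\log f_R$ behaves very differently on the two scales. We have $\log f_R \approx -|v|^2/2$ where $M_1$ dominates, and $\log f_R\approx \log p - \tfrac32\log R - |v|^2/(2R)$ where $M_R$ dominates. The crossover is at $|v|^2\approx 3\log R$. Evaluating $-\int Q(f,Q(f,f))\log f$, the terms that move mass from the bulk of $M_1$ into the region where $\log f\approx -\tfrac32\log R$ pick up a factor $\tfrac32\log R$ multiplying a collision rate of order $R^{\gamma}$. One factor $R^{\gamma/2}$ comes from each of the two collision operators, since both involve the large relative speed $\sqrt R$. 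Under the quadratic part of $\log f$ the contributions are $O(R^\gamma)$ without the logarithm. We will isolate the coefficient of $R^\gamma\log R$ as an explicit polynomial in $p$ (and a function of $\gamma$) via Gaussian integrals. Similarly, we will check that $\int Q(f,f)^2/f$ is $O(R^\gamma)$. It is dominated by the loss term $A^2 M_1$, and in the $M_R$ region the ratio is also controlled. We then define $p_\gamma$ as the positive root of the leading coefficient, so that $c_{p,\gamma}>0$ for $p<p_\gamma$.

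\textbf{Main obstacle.} The hardest step is making the heuristic of Step 2 rigorous. Specifically, $Q(f,Q(f,f))$ has mixed pieces, namely $Q(M_1,Q(M_1,M_R))$ and $Q(M_R,Q(M_1,M_R))$, and one must show that their pairings against $\log f_R$ yield exactly $\log R$ times an explicit constant, plus an error of $O(R^\gamma)$. We would first use the weak form $\int Q(g,h)\varphi = \frac12\iint g h_*\,|v-v_*|^\gamma\big(\langle\varphi\rangle_{\text{sphere}} - \varphi-\varphi_*\big)$ with the symmetrization. We would then split $\log f_R = \max(\log((1-p)M_1),\log(pM_R)) + O(1)$-bounded corrections that decay appropriately. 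Each piece pairs against Gaussians, and the averages over $\sigma$ are explicit for a constant angular kernel. The errors from the transition region $|v|^2\approx 3\log R$ and from replacing $|v-v_*|^\gamma$ by $|v_*|^\gamma$ are controlled uniformly, since $\gamma\le1$ and the Gaussian tails are explicit.
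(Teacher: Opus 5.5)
Your differentiation is correct. After symmetrizing, $-\int[Q(f,Q)+Q(Q,f)]\log f=\int Q\,\LL$, so you start from the same identity as the paper. Your splitting $\max(\log((1-p)M_1),\log(pM_R))=\log(pM_R)+\big(\log\frac{(1-p)M_1}{pM_R}\big)_+$ is, up to a remainder in $[0,\log 2]$, the paper's decomposition of $\log f_R$ into a collision invariant plus $\psi_R$.

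\textbf{The gap: the sign of the leading coefficient.} Your proposal never computes the coefficient of $R^\gamma\log R$. The step ``define $p_\gamma$ as the positive root of the leading coefficient'' assumes exactly what the theorem asserts, namely that this coefficient is positive for small $p>0$, and nothing in your Step 2 determines that sign. Once the collision invariant is discarded, the $\log R$ part equals $\tfrac32\log R$ times the change, under a cold--hot collision $(v,\sqrt R\,w)\mapsto(v',v'_*)$, of the frequency with which particles take part in cold--hot collisions. This change has two competing pieces of the same order.
\begin{itemize}
\item The destroyed cold particle no longer meets the hot bath, which contributes $-p\,\kappa_\gamma R^{\gamma/2}$.
\item The hot particle of speed $\sqrt R|w|$ is replaced by two hot particles of speeds $\sqrt R|w|\big(\tfrac{1\pm t}{2}\big)^{1/2}$, with $t=\widehat w\cdot\sigma$. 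This contributes $(1-p)R^{\gamma/2}|w|^\gamma\big[\big(\tfrac{1+t}{2}\big)^{\gamma/2}+\big(\tfrac{1-t}{2}\big)^{\gamma/2}-1\big]$, whose $\sigma$-average is $\frac{4}{2+\gamma}-1=\frac{2-\gamma}{2+\gamma}$.
\end{itemize}
Only this computation yields $c_{p,\gamma}=\tfrac32p(1-p)\big((1-p)\tfrac{2-\gamma}{2+\gamma}\kappa_{2\gamma}-p\kappa_\gamma^2\big)$ and hence positivity for $p<p_\gamma$. The bracket vanishes identically when $\gamma=2$ by energy conservation. So the sign depends on the strict subadditivity of $s\mapsto s^{\gamma/2}$ for $\gamma<2$ and on $p$ being small; it cannot be read off from the scaling heuristics.

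\textbf{Making the $\log R$ term rigorous.} Pairing the truncated quadratic $(\cdot)_+$, evaluated at post-collisional velocities, against Gaussians is not directly an explicit Gaussian computation. What makes the paper's argument work is a set of pointwise asymptotics for $\LL$ plus a uniform bound:
\begin{itemize}
\item $\LL(v)\approx\tfrac32p\kappa_\gamma R^{\gamma/2}\log R$ for cold $v$, and $\LL(v)\approx\tfrac32(1-p)|v|^\gamma\log R$ for hot $v$. These follow from the dichotomy that $-\Delta\psi_R/\log R$ tends to $\tfrac32$ for cold--hot collisions and to $0$ for cold--cold or hot--hot ones.
\item The uniform bound $|\LL(v)|\le C_pR^{\gamma/2}(1+|v|^2/R)^{\gamma/2}\log R$ lets dominated convergence be applied in the weak form of $\int A_R\LL$.
\end{itemize}

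\textbf{The negative term.} Your treatment of $\int Q^2/f$ is too quick. The gain part is of the same order $R^\gamma$ as the loss part, so the loss term does not dominate. The gain part is also a mixture of Maxwellians with temperatures spread over all of $[1,R]$, not concentrated near $(1+R)/2$. The pieces with temperature near $1$ have large $L^2(M_R^{-1})$ norm, and you need to show they are harmless. The paper does this by absorbing $|v-v_*|^\gamma$ into slightly hotter Maxwellians, using the explicit formula $P(M_a,M_b)=\frac{1}{b-a}\int_a^b M_u\dd u$ for $a<b$, and applying Minkowski's inequality. This part of your outline is fixable along those lines; the missing sign computation is the real gap.
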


The constants are quite simple at the two endpoints of the range.
\begin{alignat*}{2}
    \partial_t D(f_R) &= \tfrac32 \, p(1-p)(1-2p) \log R + o_p(\log R)
    &\qquad& (\gamma = 0) , \\
    \partial_t D(f_R) &= \tfrac32 \, p(1-p)
    \Big( 1-p-\tfrac 8\pi p \Big) R \log R + o_p(R \log R)
    &\qquad& (\gamma = 1) ,
\end{alignat*}
so that $p_0 = \frac 12$ and $p_1 = \pi/(\pi+8) \approx 0.282$.

The densities $f_R$ are smooth, strictly positive, radial and rapidly
decreasing. Their temperature can be adjusted arbitrarily by scaling: if $S_\lambda f(v) = \lambda^3 f(\lambda v)$, then a kernel homogeneous of
degree $\gamma$ in the relative velocity satisfies
$Q(S_\lambda f, S_\lambda f) = \lambda^{-\gamma} S_\lambda Q(f,f)$, so that
\begin{equation} \label{e:scaling}
    \partial_t D(S_\lambda f) = \lambda^{-2\gamma} \, \partial_t D(f).
\end{equation}
Thus, the temperature can be made to be anything we want by choosing $\lambda > 0$ accordingly.

\subsection{Note on the Fisher information}
\label{s:biblio}

The monotonicity of the Fisher information has a more successful history than the monotonicity of the entropy production. Toscani and Villani established it for Maxwell molecules in the Boltzmann and Landau settings \cite{toscani1992new,villani1998fisher,villani2000decrease}. More recently, Guillen and the author proved that the Fisher information decreases for the Landau equation with a broad class of interaction potentials including the Coulomb case, and used it to rule out blow-up \cite{guillen2025landau}; see also the short exposition \cite{guillen2025fisher}. Imbert, Villani and the author found the corresponding criterion for the Boltzmann equation, in the form of an integral inequality on the sphere, and verified it for all the physical power-law interactions \cite{imbert2026monotonicity}; see also \cite{imbert2025fisher} and the survey \cite{villani2025fisher}. These ideas have been applied to prove the existence of global smooth solutions also starting from rough initial data \cite{ji2024dissipation,golding2024global}, to derive a weak solution to the Landau hierarchy \cite{carrillo2025fisher}, for a multi-species
version \cite{junne2025existence,zhu2025fisher,duong2026multispecies}, and in the inhomogeneous setting for the fuzzy Landau equation \cite{gualdani2025fuzzy}.

Collision kernels in Theorem \ref{t:main} belong to the class for which the
Fisher information is known to decrease \cite{imbert2026monotonicity}.

\subsection{Note on the Landau equation: an open problem}

The construction in this paper, as well as in the more transparent earlier construction by the author \cite{silvestre2026entropy}, is driven by collisions for which the deviation angle is not small. The Landau equation is the grazing collision limit of the Boltzmann equation. The ideas in this paper (or in \cite{silvestre2026entropy}) do not apply to the Landau equation.

Moreover, it is not hard to verify that the entropy production of the Landau equation with Maxwell molecules in the radially symmetric setting equals $ \frac 23 EI - 6M^2$, where $E$ is the energy, $M$ is the mass and $I$ is the usual Fisher information. As a consequence, the entropy production of the Landau equation is monotone decreasing in time for radially symmetric solutions. More generally, Tabary recently proved that the entropy production is nonincreasing whenever the directional temperatures are sufficiently well distributed and, in particular, after an explicit time for every finite-mass-and-energy initial datum \cite{tabary2026monotonicity}. Whether it is nonincreasing from the initial time for every solution remains open.

\subsection{Disclaimer on the use of AI tools}

A first version of the proof in this paper was obtained by ChatGPT 5.6 Sol. It was run in Codex, in Ultra mode, with access to the source of \cite{silvestre2026entropy} and other notes by the author. While the initial proof was not long, it was very difficult to read. It was rewritten first using Claude code, and then reinterpreted and restructured by the author. The author takes full responsibility for the final version of the proof.

\subsection{Notation}

We use the following notation that is standard in the analysis of kinetic equations.

For a function $h$ on $\R^3$ we write $h_* = h(v_*)$, $h' = h(v')$ and
$h'_* = h(v'_*)$, where
\begin{equation} \label{e:vprime}
    v' = \frac{v+v_*}{2} + \frac{|v-v_*|}{2} \sigma, \qquad
    v'_* = \frac{v+v_*}{2} - \frac{|v-v_*|}{2} \sigma .
\end{equation}

The collision operator is written as a bilinear operator using the following convention
\begin{equation} \label{e:Q}
    Q(f,g)(v) = \int_{\R^3} \int_{\mathbb{S}^{2}}
    \big( f'_* g' - f_* g \big) B \dd \sigma \dd v_* .
\end{equation}

The subscript in $o_p$, $O_p$ and $C_p$ means dependence on the mixture weight
$p$, which is fixed once and for all, but never on the temperature ratio $R$,
which is the large parameter; $C_p$ may change from line to line.

\section{Setting up the computation}
\label{s:twoscales}

We recall that, by the standard pre-post collisional change of variables applied
to the gain half of \eqref{e:Q}, the following formula holds:
\begin{equation} \label{e:weak}
    \int_{\R^3} Q(f,g) \varphi \dd v
    = \int_{\R^3}\int_{\R^3} \int_{\mathbb{S}^{2}}
    f_* \, g \, \big( \varphi' - \varphi \big) B \dd \sigma \dd v_* \dd v .
\end{equation}

The following identity was derived by the author in \cite{silvestre2026entropy}. It is the starting point of our analysis here as well.
\begin{equation} \label{e:start}
    \partial_t D(f) = -\int_{\R^3} \frac{Q^2}{f} \dd v
    + \int_{\R^3} Q \, \LL \dd v ,
    \qquad
    \LL(v) = \int_{\R^3} \int_{\mathbb{S}^{2}} f_*
    \log \left( \frac{f f_*}{f' f'_*} \right) B \dd \sigma \dd v_* .
\end{equation}
The first term is negative and quadratic in $Q$. The second term has no sign. Note that the integrands of $Q$ and $\LL$ have pointwise opposite signs. For any values of $v$, $v_*$, $v'$, $v'_*$,
\[ (f'_* f' - f_* f) \text{ has the opposite sign as } f_* \log \left( \frac{f f_*}{f' f'_*} \right).\]
Thus, the values of $Q$ and $\LL$ will typically have the opposite sign but not always. Our challenge is to make $Q$ and $\LL$ have the same sign at sufficiently many points $v$ so as to make the second term positive and an order of magnitude larger than the first term. We will achieve this by a $\log R$ factor, resembling the construction in \cite{silvestre2026entropy}.

Since Maxwellians are at equilibrium, we have $Q(M_1,M_1) = Q(M_R,M_R) = 0$. Therefore, by the bilinearity of $Q$,
\begin{equation} \label{e:A}
    Q(f_R,f_R) = 2p(1-p) \, A_R , \qquad
    A_R := \tfrac 12 \big( Q(M_1,M_R) + Q(M_R,M_1) \big) .
\end{equation}
We may interpret $Q(M_1,M_R) + Q(M_R,M_1)$ as a single cold--hot interaction.
Replacing in \eqref{e:start}, this is the formula we shall evaluate:
\begin{equation} \label{e:master}
    \partial_t D(f_R) = -4p^2(1-p)^2 \int_{\R^3} \frac{A_R^2}{f_R} \dd v
    + 2p(1-p) \int_{\R^3} A_R \, \LL \dd v .
\end{equation}
In Section \ref{s:negative} we will show that the first negative term is of order $O_p(R^\gamma)$. In Section \ref{s:LL}, we will estimate that the second is asymptotically $\approx R^\gamma \log R$, with a positive constant factor when $p$ is small enough. The second term therefore beats the first by a factor
$\log R$, and decides the sign.

\medskip

Some of the terms in the collision operator can be visualized in terms of interactions between cold and hot particles. We say that a velocity is \emph{cold} if it is of order one and \emph{hot} if it is of order $\sqrt R$. We see how the two scales interact through the pre-post collisional identities \eqref{e:vprime}. If $v$ is cold and $v_*$ is hot, then $|v-v_*|$ is most often of size
$\sqrt R$ (unless the collision is grazing), so both outgoing velocities are hot. When we take limits as $R \to \infty$, we observe the following: if $v_R \to u$ and $v_{R*}/\sqrt R \to z \neq 0$, then
\begin{equation} \label{e:hotout}
    \frac{v'_R}{\sqrt R} \To y_+ := \frac{z+|z|\sigma}{2},
    \qquad
    \frac{v'_{R*}}{\sqrt R} \To y_- := \frac{z-|z|\sigma}{2},
    \qquad
    |y_\pm| = |z| \sqrt{\frac{1 \pm t}{2}},
\end{equation}
where $t = \widehat z \cdot \sigma$, and $y_\pm \neq 0$ for almost every
$\sigma$. A hot--hot collision behaves the same way: if $v_R/\sqrt R \to z$ and
$v_{R*}/\sqrt R \to \zeta$ with $z \neq \zeta$, then
\begin{equation} \label{e:hothot}
    \frac{v'_R}{\sqrt R} \To \frac{z+\zeta}{2} + \frac{|z-\zeta|}{2} \sigma ,
\end{equation}
which is again nonzero for almost every $\sigma$, and likewise for $v'_{R*}$.
Finally $|v'| \leq |v| + |v_*|$, so a cold--cold collision has two cold outputs.
In short, a collision consumes a cold particle when one of its two
inputs is cold and the other is hot.

In $\LL$, we see the logarithm of the ratio of the products of the incoming and outgoing densities. This logarithm would vanish pointwise if $f$ were a single Maxwellian. To study that quantity, we introduce the function $\psi_R(v)$ below. Set $f_R = p M_R \big( 1 + \frac{1-p}{p} M_1/M_R \big)$,
which gives the exact formula
\begin{equation} \label{e:psi}
    \log f_R(v) = \log \frac{p}{(2\pi R)^{3/2}} - \frac{|v|^2}{2R} + \psi_R(v) ,
    \qquad
    \psi_R(v) := \log \left( 1 + \frac{1-p}{p} R^{3/2} e^{-\theta_R |v|^2}
    \right) ,
\end{equation}
where $\theta_R = \frac{R-1}{2R} \in \big[ \frac 14, \frac 12 \big]$. The function $\psi_R(v)$ captures the part of $f_R$ that contributes to the logarithm in $\mathcal L$. Indeed, the four logarithms in \eqref{e:start} occur with two plus
signs and two minus signs, so the constant cancels, and $|v|^2/(2R)$ cancels as
well by the conservation of energy $|v'|^2+|v'_*|^2 = |v|^2+|v_*|^2$. Hence
\begin{equation} \label{e:Lpsi}
    \log \frac{f_R f_{R*}}{f'_R f'_{R*}} = -\Delta \psi_R ,
    \qquad
    \Delta \psi_R := \psi'_R + \psi'_{R*} - \psi_R - \psi_{R*} ,
\end{equation}
and therefore
\begin{equation} \label{e:LLpsi}
    \LL(v) = -\int_{\R^3} \int_{\mathbb{S}^2} f_R(v_*) \, \Delta \psi_R \,
    |v-v_*|^\gamma \frac{\dd \sigma}{4\pi} \dd v_* .
\end{equation}

Understanding the asymptotic behavior of $\Delta \psi_R$ as $R \to \infty$ is key to the proof of Theorem \ref{t:main}. We start with a simple upper bound.

\begin{lemma} \label{l:psi}
For every $p \in (0,1)$,
\begin{equation} \label{e:psi-bound}
    0 \leq \psi_R(v) \leq \tfrac 32 \log R + O_p(1)
    \qquad \text{for all } v \in \R^3 \text{ and } R \geq 2 .
\end{equation}
Moreover, if we let $R \to \infty$ and $v_R \to v$ then $\psi_R(v_R) = \frac 32 \log R + O_p(1)$, and if
$v_R/\sqrt R \to y \neq 0$ then $\psi_R(v_R) \to 0$.
\end{lemma}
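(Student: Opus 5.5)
The lemma follows directly from the explicit formula \eqref{e:psi}, so the plan is to read off each claim from it. Write $K_p = \frac{1-p}{p} > 0$, so that $\psi_R(v) = \log(1 + K_p R^{3/2} e^{-\theta_R |v|^2})$.

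First, the two-sided bound \eqref{e:psi-bound}. The argument of the logarithm is at least $1$, which gives $\psi_R \geq 0$. Since $e^{-\theta_R|v|^2} \leq 1$, it is also at most $1 + K_p R^{3/2}$. Then
\[ \log(1+K_pR^{3/2}) = \tfrac32 \log R + \log K_p + \log\big(1 + K_p^{-1}R^{-3/2}\big). \]
For $R \geq 2$, the last term is bounded by $\log(1 + K_p^{-1} 2^{-3/2})$, which depends only on $p$. This gives the upper bound uniformly in $v$.

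Next, the cold case $v_R \to v$. The bound $|v_R| \leq C$ holds for $R$ large, and $\theta_R \leq \frac12$, so $e^{-\theta_R|v_R|^2} \geq e^{-C^2/2}$. Hence
\[ \psi_R(v_R) \geq \log\big(K_p R^{3/2} e^{-C^2/2}\big) = \tfrac32 \log R + \log K_p - C^2/2 . \]
Combined with the upper bound, this gives $\psi_R(v_R) = \frac32\log R + O(1)$. The constant here depends on $p$ and on the bound for the sequence $v_R$, which is the intended meaning of $O_p(1)$ in this statement.

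Finally, the hot case $v_R/\sqrt R \to y \neq 0$. Then $|v_R|^2 \geq |y|^2 R/2$ for $R$ large, and $\theta_R \geq \frac14$, so
\[ K_p R^{3/2} e^{-\theta_R|v_R|^2} \leq K_p R^{3/2} e^{-|y|^2R/8} \to 0 . \]
Since $\log(1+x) \leq x$, it follows that $\psi_R(v_R) \to 0$.

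None of these steps is a real obstacle. The only point needing care is keeping track of what the $O_p(1)$ constants depend on, especially in the cold case, where the constant also depends on the sequence $v_R$.
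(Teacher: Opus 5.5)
Your proposal is correct and follows the paper's proof: each of the three claims is read directly off the explicit formula \eqref{e:psi}, using $\theta_R \in [\frac14,\frac12]$ together with the boundedness of $\theta_R|v_R|^2$ in the cold case and its growth like $R$ in the hot case. Your remark that the $O_p(1)$ in the cold case also depends on the sequence $v_R$ is accurate; the paper leaves this implicit.
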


\begin{proof}
All three statements are read off the definition of $\psi_R$ in \eqref{e:psi}.
It is nonnegative, and at most
$\log \big( 1 + \frac{1-p}{p} R^{3/2} \big) = \frac 32 \log R + O_p(1)$. If
$v_R \to v$ then
$\theta_R |v_R|^2$ is bounded, so $R^{3/2} e^{-\theta_R |v_R|^2} \to \infty$
like $R^{3/2}$ and $\psi_R(v_R) = \frac 32 \log R + O_p(1)$. If
$v_R = \sqrt R \, y_R$ with
$y_R \to y \neq 0$, then $R^{3/2} e^{-\theta_R R |y_R|^2} \to 0$ because
$\theta_R \geq 1/4$, and $\psi_R(v_R) \to 0$.
\end{proof}

Combining Lemma \ref{l:psi} with \eqref{e:hotout}, \eqref{e:hothot} and
$|v'| \leq |v|+|v_*|$, we observe the following dichotomy.
\begin{equation} \label{e:dichotomy}
    -\frac{\Delta \psi_R}{\log R} \To
    \begin{cases}
        \tfrac 32 & \text{for a cold--hot collision}, \\
        0 & \text{for a cold--cold or a hot--hot collision},
    \end{cases}
\end{equation}
for almost every $\sigma$. Here the labels refer to the two incoming
velocities: \emph{cold} means a sequence converging in $\R^3$, \emph{hot} means
a sequence whose quotient by $\sqrt R$ converges to a nonzero limit, and in the
hot--hot case the two limits are distinct.

\section{The negative term}
\label{s:negative}

The purpose of this section is to prove the following lemma describing the asymptotic behavior of the first term in \eqref{e:master}.

\begin{lemma} \label{l:chi}
For every $p \in (0,1)$, $\displaystyle
\int_{\R^3} \frac{A_R^2}{f_R} \dd v = O_p(R^\gamma)$ as $R \to \infty$.
\end{lemma}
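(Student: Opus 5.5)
The plan is to bound $A_R$ pointwise by the positive gain and loss parts of the two bilinear terms. We then control each square separately against a lower bound for $f_R$, namely
\[ f_R \geq \max\{(1-p)M_1,\ pM_R\}. \]
By symmetry of the roles it suffices to treat $Q^\pm(M_1,M_R)$ and $Q^\pm(M_R,M_1)$. Since $(a+b)^2\le 2a^2+2b^2$, we get
\[ \int A_R^2/f_R \leq C\sum \int (Q^\pm)^2/f_R . \]

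\emph{Loss terms.} These are explicit products. First, $Q^-(M_1,M_R)(v)=M_1(v)\,(M_R*|\cdot|^\gamma)(v)$, and $M_R*|\cdot|^\gamma\le C(R^{\gamma/2}+|v|^\gamma)$. Dividing by $f_R\ge(1-p)M_1$ gives
\[ \int M_1(v)\,(R^{\gamma/2}+|v|^\gamma)^2\,dv = O(R^\gamma). \]
Second, $Q^-(M_R,M_1)(v)=M_R(v)\,(M_1*|\cdot|^\gamma)(v)\le C M_R(v)(1+|v|)^\gamma$. Dividing by $f_R\ge pM_R$ leaves $\int M_R(1+|v|)^{2\gamma}=O_p(R^\gamma)$. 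So both loss contributions are $O_p(R^\gamma)$.

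\emph{Gain terms.} Here the main work lies. The gain is
\[ G(v)=\int\!\!\int M_1(v'_*)\,M_R(v')\,|v-v_*|^\gamma\,\frac{d\sigma}{4\pi}\,dv_*, \]
plus its symmetric counterpart. Rather than a pointwise bound, I would use duality and Cauchy--Schwarz:
\[ \int G^2/f_R=\sup_\varphi\Big(2\int G\varphi-\int f_R\varphi^2\Big), \]
and by \eqref{e:weak}, $\int G\varphi = \int\!\!\int\!\!\int M_1(v_*)M_R(v)\,\varphi(v')\,B$. Cauchy--Schwarz in the full collision measure then bounds this by
\[ \Big(\int\!\!\int M_1(v_*)M_R(v)\,B\Big)^{1/2}\Big(\int\!\!\int\!\!\int M_1(v_*)M_R(v)\,B\,\varphi(v')^2\Big)^{1/2}. \]
The first factor is $O(R^{\gamma/4})$, since it is $\big(\int\!\!\int M_1(v_*)M_R(v)|v-v_*|^\gamma\big)^{1/2}$ and the double integral is $O(R^{\gamma/2})$. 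Undoing the change of variables, the second factor is $\big(\int\varphi(v)^2\,G(v)\,dv\big)^{1/2}$. So it remains to show the pointwise estimate
\[ G(v)\le C_p\,R^{\gamma/2}\,f_R(v). \]
This yields $\int G\varphi\le C R^{\gamma/2}\big(\int f_R\varphi^2\big)^{1/2}$, and hence $\int G^2/f_R=O_p(R^\gamma)$.

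\emph{Main obstacle: the pointwise estimate.} I would use energy conservation, $|v'|^2+|v'_*|^2=|v|^2+|v_*|^2$, in two regimes.

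For hot or intermediate $v$ (say $|v|\ge C\sqrt{\log R}$), we have
\[ M_1(a)M_R(b)=(2\pi)^{-3}R^{-3/2}\exp\!\big(-\tfrac{|a|^2}2-\tfrac{|b|^2}{2R}\big), \]
and the exponent is at most $-\tfrac{|a|^2}{2}\big(1-\tfrac1R\big)-\tfrac{|a|^2+|b|^2}{2R}$. This lets us extract $M_R(v)$ times a factor integrable in $v_*$ against $|v-v_*|^\gamma$. Concretely, we must check $G(v)\lesssim M_R(v)\,R^{\gamma/2}$ using the Carleman representation, where $v'$ ranges over the plane through $v$ orthogonal to $v-v'_*$. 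The factor $M_1(v'_*)$ localizes $v'_*$ to $O(1)$, and the remaining plane integral of $M_R$ gives the right order.

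For cold $v$, it suffices that $G(v)\lesssim R^{\gamma/2}\,R^{-1}\le C M_1(v)$-comparable. This reflects the fact that a cold output from a cold--hot pair requires a $\sigma$-set of measure $O(1/R)$ on a sphere of radius $\sim\sqrt R$.

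The delicate part is matching these two regimes uniformly in the transition region $|v|\sim\sqrt{\log R}$, where $f_R$ switches from $M_1$ to $M_R$. I would first try the Carleman form with the elementary bound $M_1(v'_*)\le e^{-|v'_*|^2/4}M_1(v'_*)^{1/2}$ to absorb the polynomial losses there.
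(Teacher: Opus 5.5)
Your loss-term estimates are fine and match the paper's. The gap is in the gain term. There everything rests on the pointwise bound $G(v)\le C_pR^{\gamma/2}f_R(v)$, and that bound is false on a whole range of $|v|$, not merely delicate near $|v|\sim\sqrt{\log R}$.

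Take first $\gamma=0$. Lemma \ref{l:mixture} gives $G=P(M_1,M_R)=\frac{1}{R-1}\int_1^R M_u\dd u$. Let $|v|=r$ with $\sqrt{\log R}\ll r\ll\sqrt R$. The substitution $s=r^2/(2u)$ gives $\int_1^R M_u(v)\dd u\approx(2\pi r)^{-1}$, so $G(v)\approx(2\pi R r)^{-1}$. On the other hand $f_R(v)\approx p(2\pi R)^{-3/2}$ there. So $G/f_R$ is of order $\sqrt R/(pr)$: roughly $R^{1/4}$ at $r=R^{1/4}$, and roughly $\sqrt{R/\log R}$ for $r\sim\sqrt{\log R}$.

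For $\gamma>0$ the same happens with an extra factor $R^{\gamma/2}$ in $G$, because the contributing pre-collisional pairs are typical cold--hot pairs with $|v'-v'_*|\sim\sqrt R$. Physically, nearly grazing cold--hot collisions, with $1-\widehat z\cdot\sigma\sim r^2/R$, fill the entire shell $\sqrt{\log R}\ll|v|\ll\sqrt R$ with density $\sim R^{\gamma/2-1}/|v|$. In that shell $f_R$ contains only the thin hot Maxwellian.

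Your Carleman heuristic undercounts this. The integral of $M_R$ over a plane through $v$ is a one-dimensional Gaussian density, of size $R^{-1/2}$ rather than $R^{-3/2}$. Combined with the Jacobian factor $|v-v'_*|^{-1}\approx|v|^{-1}$ and the weight $|v-v_*|^{\gamma-1}\sim R^{(\gamma-1)/2}$, it gives exactly $R^{\gamma/2-1}/|v|$, not $R^{\gamma/2}M_R(v)$.

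This cannot be repaired within your scheme. Duality plus unweighted Cauchy--Schwarz amounts exactly to the trivial bound $\int G^2/f_R\le\|G/f_R\|_{L^\infty}\int G$. Here $\int G\approx\kappa_\gamma R^{\gamma/2}$ and $\|G/f_R\|_{L^\infty}\gtrsim R^{(1+\gamma)/2}/\sqrt{\log R}$. So the best it can give is of order $R^{\gamma+1/2}/\sqrt{\log R}$, which would overwhelm the $R^\gamma\log R$ term that decides the sign in Theorem \ref{t:main}.

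The missing idea is a genuinely weighted-$L^2$ estimate, one that sees that the region where $G/f_R$ is large carries little of the mass of $G$. The paper obtains it in three steps:
\begin{enumerate}
\item It absorbs the kinetic factor into slightly hotter Maxwellians, $|v-v_*|^\gamma M_1(v)M_R(v_*)\le C_\eps R^{\gamma/2}M_a(v)M_{aR}(v_*)$ with $a=1+\eps<2$. This gives $G_R\le C_\eps R^{\gamma/2}P(M_a,M_{aR})$.
\item It writes $P(M_a,M_{aR})$ as the uniform average of $M_u$ over $u\in[a,aR]$.
\item It applies Minkowski's inequality in $L^2(M_R^{-1}\dd v)$, using $\int M_u^2/M_R=(x(2-x))^{-3/2}$ with $x=u/R$.
\end{enumerate}
The intermediate Maxwellians $M_u$ with $u\ll R$ are pointwise much larger than $M_R$. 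Even so, the resulting $x^{-3/4}$ singularity is integrable, so $\int G_R^2/f_R\le p^{-1}\int G_R^2/M_R=O_p(R^\gamma)$, using only $f_R\ge pM_R$.
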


Let us define the $s$th moments of the unit Maxwellian $M_1$ as $\kappa_s$. These quantities will come up in some computations in this paper. For $s \geq 0$,
\begin{equation} \label{e:kappa}
    \kappa_s = \int_{\R^3} |z|^s M_1(z) \dd z
    = 2^{s/2} \, \frac{\Gamma\big( \tfrac{3+s}{2} \big)}
                      {\Gamma\big( \tfrac 32 \big)} ,
    \qquad
    \kappa_0 = 1, \quad \kappa_1 = 2\sqrt{\tfrac 2\pi}, \quad \kappa_2 = 3 ,
\end{equation}
so that $\kappa_1^2 = 8/\pi$. Furthermore, for $T > 0$
\begin{equation} \label{e:nu}
    \nu_T(v) = \int_{\R^3} |v-v_*|^\gamma M_T(v_*) \dd v_*
\end{equation}
is the frequency at which a particle at $v$ collides with a Maxwellian of
temperature $T$. Since $\gamma \in [0,1]$, the function $s \mapsto s^\gamma$ is
subadditive, so that $|v-v_*|^\gamma \leq \big( |v| + |v_*| \big)^\gamma \leq
|v|^\gamma + |v_*|^\gamma$. We thus obtain the following elementary upper bound
for $\nu_T(v)$,
\begin{align}
\nu_T(v) &\leq \int_{\R^3} \left( |v|^\gamma + |v_*|^\gamma \right) M_T(v_*) \dd v_* \nonumber \\
&= |v|^\gamma + \kappa_\gamma T^{\gamma/2} . \label{e:nu-bound}
\end{align}

For any two probability densities $f$ and $g$, we define $P(f,g)$ as follows:
\begin{equation} \label{e:Pdef}
    P(f,g)(v) = \int_{\R^3} \int_{\mathbb{S}^2} f(v'_*) \, g(v')
    \frac{\dd \sigma}{4\pi} \dd v_* ,
\end{equation}
which is the gain part of \eqref{e:Q} when $\gamma = 0$. Note that we use this same definition for $P(f,g)$ regardless of the value of $\gamma$.

Using \eqref{e:weak} turns \eqref{e:Pdef} into
\begin{equation} \label{e:P}
    \int_{\R^3} P(f,g) \varphi \dd v
    = \int_{\R^3}\int_{\R^3} f(v) g(v_*) \int_{\mathbb{S}^2} \varphi(v')
      \frac{\dd \sigma}{4\pi} \dd v_* \dd v
\end{equation}
for every bounded continuous $\varphi$. It is easy to see that $P(f,g)$ is again a probability density and it is symmetric in $f$ and $g$.

The following lemma could be derived easily using Bobylev's Fourier representation of the Maxwell-molecule collision operator \cite{bobylev1975fourier}. We provide a self-contained direct computation in physical variables.

\begin{lemma} \label{l:mixture}
For $a, b > 0$ with $a \neq b$,
\[ P(M_a,M_b) = \frac 12 \int_{-1}^1 M_{\theta(t)} \dd t
   = \frac{1}{|b-a|} \int_{\min\{a,b\}}^{\max\{a,b\}} M_u \dd u ,
   \qquad \theta(t) = \frac{a+b}{2} + t \, \frac{a-b}{2} .\]
\end{lemma}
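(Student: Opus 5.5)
The plan is to test \eqref{e:P} against an arbitrary bounded continuous $\varphi$ and to compute the resulting average directly in physical variables. Take $v \sim M_a$ and $v_* \sim M_b$ independent. With the substitution $w = \frac{v+v_*}{2}$, $z = v - v_*$, the velocity produced by the collision is $v' = w + \frac{|z|}{2}\sigma$ with $\sigma$ uniform on $\mathbb{S}^2$. Because $M_a(v)M_b(v_*)$ is a Gaussian in $(v,v_*)$, the pair $(w,z)$ is jointly Gaussian. Its covariances are $\mathrm{Cov}(w) = \frac{a+b}{4}I$, $\mathrm{Cov}(z) = (a+b)I$ and $\mathrm{Cov}(w,z) = \frac{a-b}{2}I$. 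We can therefore write $w = \frac{a-b}{2(a+b)} z + \xi$, where $\xi$ is independent of $z$ and centered Gaussian with covariance
\[ \Big(\frac{a+b}{4} - \frac{(a-b)^2}{4(a+b)}\Big) I = \frac{ab}{a+b} I . \]
This gives
\[ v' = \xi + \frac{a-b}{2(a+b)} z + \frac{|z|}{2}\sigma . \]

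The next step is to determine the law of $\eta := \frac{a-b}{2(a+b)} z + \frac{|z|}{2}\sigma$ conditional on $|z| = r$. Since $\sigma$ is uniform and independent of $z$, and the law of $z$ is rotation invariant, we may condition on $t = \hat z\cdot\sigma$. This variable is uniform on $[-1,1]$ by Archimedes' theorem. Given $t$, the pair $(\hat z, \sigma)$ is a uniformly rotated configuration, so $\eta$ is rotation invariant with
\[ |\eta|^2 = \frac{r^2}{4}\Big(\alpha^2 + 2\alpha t + 1\Big), \qquad \alpha = \frac{a-b}{a+b}. \]
Integrating over the Gaussian law of $z$, with density $M_{a+b}$, the direction of $z$ stays uniform. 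For fixed $t$, the random vector $\eta$ should then be exactly Gaussian: it is a linear map applied to $z$, namely $z \mapsto \frac{\alpha}{2}z + \frac{1}{2}R_t z$ where $R_t$ is a random rotation taking $\hat z$ to $\sigma$. To make this precise, I would condition on $t$ together with the rotation relating $\sigma$ to $\hat z$. Write $\sigma = t\hat z + \sqrt{1-t^2}\,\omega$, with $\omega$ uniform on the unit circle orthogonal to $\hat z$, so that $|z|\sigma = t z + \sqrt{1-t^2}\,|z|\omega$.

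The difficulty is that $|z|\omega$ is not a linear function of $z$. The cleaner route is to note that $\eta$ is radial and that, for radial vectors, Gaussianity is decided by the law of $|\eta|$. We have $|\eta| = \frac{|z|}{2}\sqrt{\alpha^2 + 2\alpha t + 1}$, and $|z|$ has the chi law of $M_{a+b}$. Since a radial vector whose modulus is a constant multiple of a chi variable is Gaussian, we get, given $t$, that $\eta \sim M_{s(t)}$ with
\[ s(t) = \frac{a+b}{4}\,(\alpha^2 + 2\alpha t + 1) . \]
The independent summand $\xi$ adds variances, so
\[ v' \sim M_{\frac{ab}{a+b} + s(t)} . \]
A short calculation gives
\[ \frac{ab}{a+b} + \frac{(a-b)^2 + (a+b)^2}{4(a+b)} + \frac{a-b}{2}\,t = \frac{a+b}{2} + t\,\frac{a-b}{2} = \theta(t). \]
Averaging over $t$ uniform on $[-1,1]$ yields the first formula. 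The second formula follows from the linear change of variables $u = \theta(t)$, $\mathrm{d}u = \frac{|a-b|}{2}\,\mathrm{d}t$.

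The main obstacle I expect is the conditioning step. It has to be justified that, given $t$, the vector $\eta$ is radial and independent of $\xi$. Independence holds because $\xi$ is independent of $(z,\sigma)$. Radial symmetry follows by applying a simultaneous rotation to $z$ and $\sigma$, which preserves the joint law and preserves $t$.
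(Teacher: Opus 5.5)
Your proposal is correct and is essentially the paper's proof written in probabilistic language. The paper likewise passes to the center of mass $m=\frac{v+v_*}{2}$ and a relative variable, and completes the square to split off an independent Gaussian of temperature $\frac{ab}{a+b}$ (your regression residual $\xi$); it then averages over simultaneous rotations at fixed $t=\widehat z\cdot\sigma$ to show the remaining summand is Gaussian with temperature $\frac{a+b}{4}(1+\lambda^2+2\lambda t)$, $\lambda=\frac{a-b}{a+b}$, and concludes with $M_s*M_{s'}=M_{s+s'}$, your relative variable $z=v-v_*$ being merely twice the paper's.
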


\begin{proof}
By \eqref{e:P} it is enough to show that
\[ I := \int_{\R^3}\int_{\R^3} M_a(v) M_b(v_*) \int_{\mathbb{S}^2} \varphi(v')
   \frac{\dd \sigma}{4\pi} \dd v_* \dd v
   = \frac 12 \int_{-1}^1 \int_{\R^3} M_{\theta(t)} \varphi \dd v \dd t \]
for every bounded continuous $\varphi$. Pass to $m = \frac{v+v_*}{2}$ and
$z = \frac{v-v_*}{2}$, so that $v' = m+|z|\sigma$ and
$\dd v \dd v_* = 8 \dd m \dd z$. With $\lambda = (a-b)/(a+b)$, completing the
square in $m$ gives
\[ \frac{|m+z|^2}{2a} + \frac{|m-z|^2}{2b}
   = \frac{a+b}{2ab} \big| m - \lambda z \big|^2 + \frac{2|z|^2}{a+b} ,
   \qquad \text{that is} \qquad
   8 M_a(v) M_b(v_*) = M_\alpha(m-\lambda z) \, M_\beta(z) , \]
where $\alpha = ab/(a+b)$ and $\beta = (a+b)/4$. Substituting $m = g+\lambda z$
and writing $\widehat z = z/|z|$,
\[ I = \int_{\R^3}\int_{\R^3} M_\alpha(g) \, M_\beta(z) \int_{\mathbb{S}^2}
   \varphi \big( g + |z| (\lambda \widehat z + \sigma) \big)
   \frac{\dd \sigma}{4\pi} \dd z \dd g . \]
Since $M_\beta$ is radial, the $z$-integral is unchanged if the inner integral
is replaced by its average over $\widehat z \in \mathbb{S}^2$. The measure
$\frac{\dd \widehat z}{4\pi} \frac{\dd \sigma}{4\pi}$ so obtained is invariant
under simultaneous rotations, which fix $t = \widehat z \cdot \sigma$, and
$|\lambda \widehat z + \sigma|^2 = 1+\lambda^2+2\lambda t$; averaging over the
rotation group therefore replaces $\lambda \widehat z + \sigma$ by
$\sqrt{1+\lambda^2+2\lambda t} \, \eta$ with $\eta$ averaged over
$\mathbb{S}^2$, and leaves $t$ the measure $\frac 12 \dd t$ on $[-1,1]$. Undoing
that average by radiality once more and substituting
$\zeta = \sqrt{1+\lambda^2+2\lambda t} \, z$,
\[ I = \frac 12 \int_{-1}^1 \int_{\R^3}\int_{\R^3} M_\alpha(g) \,
   M_{(1+\lambda^2+2\lambda t)\beta}(\zeta) \, \varphi(g+\zeta)
   \dd \zeta \dd g \dd t . \]
Finally $M_s * M_{s'} = M_{s+s'}$, and $1+\lambda^2 = 2(a^2+b^2)/(a+b)^2$ gives
\[ \alpha + \big( 1+\lambda^2+2\lambda t \big) \beta
   = \frac{a+b}{2} + t \, \frac{a-b}{2} = \theta(t) , \]
which is the first equality.
\end{proof}

\begin{lemma} \label{l:L2}
For every $a \in (0,2)$ there is a constant $C_a$ such that
\[ \int_{\R^3} \frac{P(M_a, M_{aR})^2}{M_R} \dd v \leq C_a
   \qquad \text{for all } R \geq 2 . \]
\end{lemma}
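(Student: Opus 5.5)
The plan is to insert the explicit formula from Lemma \ref{l:mixture} and compute the resulting Gaussian integrals exactly. I will keep the square as a double integral over temperatures rather than bounding it by Jensen's inequality. The Jensen route, $P^2 \leq$ the average of $M_u^2$, is too lossy: it produces a bound of order $R^{1/2}$.

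\emph{Step 1 (representation).} By Lemma \ref{l:mixture} with $b = aR > a$,
\[ P(M_a,M_{aR}) = \frac{1}{a(R-1)} \int_a^{aR} M_u \dd u . \]
Squaring and using Fubini (everything is nonnegative), I get
\[ \int_{\R^3} \frac{P(M_a,M_{aR})^2}{M_R} \dd v
   = \frac{1}{a^2(R-1)^2} \int_a^{aR}\int_a^{aR} \int_{\R^3} \frac{M_u M_w}{M_R} \dd v \dd u \dd w . \]

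\emph{Step 2 (Gaussian integral).} The integrand is $(2\pi)^{-3/2}(R/(uw))^{3/2} e^{-q|v|^2/2}$ with $q = \frac1u + \frac1w - \frac1R$. Whenever $q>0$, integrating in $v$ gives
\[ \int_{\R^3} \frac{M_u M_w}{M_R} \dd v = \Big( \frac{R}{u+w-uw/R} \Big)^{3/2} . \]
This is the key point, and the only place where $a<2$ enters. For $u,w \leq aR$ we have
\[ \frac{uw}{R} \leq a\min\{u,w\} \leq \frac a2 (u+w), \]
hence $u+w-uw/R \geq (1-\frac a2)(u+w) > 0$. This gives positivity of $q$ and the bound
\[ \int_{\R^3} \frac{M_u M_w}{M_R} \dd v \leq (1-a/2)^{-3/2} R^{3/2} (u+w)^{-3/2} . \]

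\emph{Step 3 (temperature integral).} It remains to bound
\[ \int_a^{aR}\int_a^{aR} (u+w)^{-3/2} \dd u \dd w \leq \int_0^{aR}\int_0^{aR} (u+w)^{-3/2} \dd u \dd w . \]
By scaling $u,w \mapsto aR\,u, aR\,w$, this equals $(aR)^{1/2}\int_{[0,1]^2}(u+w)^{-3/2}$. The last integral is finite, since $(u+w)^{-3/2}$ is integrable near the origin in two dimensions. Collecting the factors gives a total bound of
\[ C\, \frac{R^{3/2}\,(aR)^{1/2}}{a^2(R-1)^2}\,(1-a/2)^{-3/2} , \]
which is bounded for $R \geq 2$ since $(R-1)^2 \geq R^2/4$. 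This yields $C_a$.

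The main obstacle is Step 2: making sure the exponent $q$ stays positive uniformly over the whole range of $u$ and $w$. This is exactly where the hypothesis $a<2$ is used. The rest is a matter of not losing powers of $R$, which is why the square must not be expanded by Jensen's inequality.
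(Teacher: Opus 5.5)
Your proof is correct. It differs from the paper's only in how you control the square of the temperature average.

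The paper applies Minkowski's integral inequality in $L^2(M_R^{-1}\dd v)$. That way it needs only the diagonal Gaussian integral $\int_{\R^3} M_u^2/M_R \dd v = (x(2-x))^{-3/2}$ with $x = u/R$, which is finite exactly because $u \leq aR < 2R$. It then bounds $\frac{R}{a(R-1)} \int_{a/R}^{a} (x(2-x))^{-3/4} \dd x$ using the integrability of $x^{-3/4}$ at $0$.

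You instead keep the square as an exact double integral. This forces you to compute the off-diagonal integrals $\int_{\R^3} M_u M_w/M_R \dd v = \big(R/(u+w-uw/R)\big)^{3/2}$ and to check positivity through $uw/R \leq \frac a2 (u+w)$. The decisive integrability is then that of $(u+w)^{-3/2}$ near the origin of the plane, rather than that of $x^{-3/4}$ near $0$ on the line.

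The two routes are closely related. Minkowski in $L^2$ amounts to bounding your cross terms by Cauchy--Schwarz, $\int M_u M_w/M_R \leq \|M_u\| \, \|M_w\|$ with norms in $L^2(M_R^{-1}\dd v)$. Your explicit bound $(1-a/2)^{-3/2} R^{3/2} (u+w)^{-3/2}$ is, by AM--GM, no larger than the resulting $(2-a)^{-3/2} R^{3/2} (uw)^{-3/4}$. Both give a constant of the form $C\, a^{-3/2}(2-a)^{-3/2}$. The paper's argument is shorter, since it needs only one Gaussian computation, while yours computes the cross terms explicitly.

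Your side remark is also right. Jensen would give $\frac{R}{a(R-1)}\int_{a/R}^a (x(2-x))^{-3/2}\dd x$, which grows like $R^{1/2}$. Minkowski, or your exact expansion, is precisely what avoids this loss.
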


\begin{proof}
A direct Gaussian computation gives, for $0 < u < 2R$,
\[ \int_{\R^3} \frac{M_u^2}{M_R} \dd v = \big( x(2-x) \big)^{-3/2},
   \qquad x = \frac uR . \]
By Lemma \ref{l:mixture} and Minkowski's integral inequality,
\[ \left( \int_{\R^3} \frac{P(M_a,M_{aR})^2}{M_R} \dd v \right)^{1/2}
   \leq \frac{1}{a(R-1)} \int_a^{aR}
   \left( \frac uR \Big( 2 - \frac uR \Big) \right)^{-3/4} \dd u
   = \frac{R}{a(R-1)} \int_{a/R}^{a} \big( x(2-x) \big)^{-3/4} \dd x . \]
On $(0,a]$ we have $2-x \geq 2-a > 0$, so the last integral is at most
$(2-a)^{-3/4} \int_0^a x^{-3/4} \dd x = 4 a^{1/4} (2-a)^{-3/4}$, and
$R/(R-1) \leq 2$ for $R \geq 2$.
\end{proof}

\begin{proof}[Proof of Lemma \ref{l:chi}]
Since $\int_{\mathbb{S}^2} B \dd \sigma = |v-v_*|^\gamma$, the loss part of
\eqref{e:Q} is explicit in terms of $\nu_R$ and $\nu_1$. Thus, $A_R$ splits as
\begin{equation} \label{e:split}
    A_R = G_R - \tfrac 12 M_1 \nu_R - \tfrac 12 M_R \nu_1 ,
\end{equation}
where $G_R \geq 0$ collects the two gain terms. By \eqref{e:weak}, and after
exchanging $v$ and $v_*$ in the one coming from $Q(M_1,M_R)$,
\begin{equation} \label{e:gain}
    \int_{\R^3} G_R \, \varphi \dd v = \int_{\R^3}\int_{\R^3} M_1(v) M_R(v_*)
    |v-v_*|^\gamma \int_{\mathbb{S}^2} \varphi(v') \frac{\dd \sigma}{4\pi}
    \dd v_* \dd v .
\end{equation}
Using $f_R \geq (1-p) M_1$, $f_R \geq p M_R$ and the bound on $\nu_T$ in
\eqref{e:nu-bound},
\[ \int_{\R^3} \frac{(M_1 \nu_R)^2}{f_R} \leq
   \frac 1{1-p} \int_{\R^3} M_1 \big( |v|^{\gamma} + \kappa_\gamma R^{\gamma/2}\big)^2
   = O_p(R^\gamma),
   \quad
   \int_{\R^3} \frac{(M_R \nu_1)^2}{f_R} \leq
   \frac 1p \int_{\R^3} M_R \big( |v|^\gamma + \kappa_\gamma \big)^{2}
   = O_p(R^\gamma). \]
Both estimates follow by expanding the square and using
$\int_{\R^3} M_T(w) |w|^s \dd w = T^{s/2} \kappa_s$.

For the gain term, fix $a = 1+\eps$ with $\eps$ small, so that $a < 2$. We claim
that
\begin{equation} \label{e:absorption}
    |v-v_*|^\gamma M_1(v) M_R(v_*) \leq C_\eps R^{\gamma/2} M_a(v) M_{aR}(v_*)
\end{equation}
uniformly for $R \geq 2$. Indeed, after dividing by the right hand side, and
writing $s = |v|$ and $\tau = |v_*|/\sqrt R$, the nonconstant factor is at most a
constant multiple of $(s+\tau)^\gamma e^{-\frac{\eps}{2a}(s^2+\tau^2)}$, which is
bounded. The role of \eqref{e:absorption} is to remove the factor
$|v-v_*|^\gamma$ from \eqref{e:gain} at the cost of $R^{\gamma/2}$; what is left
is \eqref{e:P} with $f = M_a$ and $g = M_{aR}$, so that
$G_R \leq C_\eps R^{\gamma/2} P(M_a,M_{aR})$ for every $\gamma \in [0,1]$. By
Lemma \ref{l:L2},
\[ \int_{\R^3} \frac{G_R^2}{f_R} \dd v
   \leq \frac 1p \int_{\R^3} \frac{G_R^2}{M_R} \dd v
   \leq \frac{C_\eps^2 R^\gamma}{p}
   \int_{\R^3} \frac{P(M_a,M_{aR})^2}{M_R} \dd v = O_p(R^\gamma) . \qedhere \]
\end{proof}

\section{The second term}
\label{s:LL}

In this section we analyze the behavior of the second term in \eqref{e:start} (which is the same as the second term in \eqref{e:master}) as $R \to \infty$. The first lemma establishes an upper bound and studies the pointwise asymptotics of $\LL(v)$ as $R \to \infty$.

\begin{lemma} \label{l:LL}
There is a constant $C_p$ such that
\begin{equation} \label{e:LL-bound}
    |\LL(v)| \leq C_p R^{\gamma/2}
    \left( 1 + \frac{|v|^2}{R} \right)^{\gamma/2} \log R
    \qquad \text{for all } v \in \R^3 \text{ and } R \geq 2 .
\end{equation}
Moreover $\LL$, like $f_R$, depends on $R$, and as $R \to \infty$ the following
hold: if $v_R \to u$ then
$\LL(v_R)/(R^{\gamma/2} \log R) \to \frac 32 p \, \kappa_\gamma$, and if
$v_R/\sqrt R \to y \neq 0$ then
$\LL(v_R)/(R^{\gamma/2} \log R) \to \frac 32 (1-p) |y|^\gamma$.
\end{lemma}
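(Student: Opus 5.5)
We work directly from the representation \eqref{e:LLpsi}. For the uniform bound, Lemma \ref{l:psi} gives $0 \le \psi_R \le \frac32 \log R + O_p(1)$, so $|\Delta\psi_R| \le 3\log R + C_p \le C_p\log R$ for $R\ge2$. Hence
\[ |\LL(v)| \le C_p \log R \int_{\R^3} f_R(v_*)\,|v-v_*|^\gamma \dd v_* = C_p\log R\,\big((1-p)\nu_1(v) + p\,\nu_R(v)\big). \]
By \eqref{e:nu-bound}, $\nu_1(v)+\nu_R(v) \le 2|v|^\gamma + \kappa_\gamma(1+R^{\gamma/2})$. This is bounded by $C R^{\gamma/2}(1+|v|^2/R)^{\gamma/2}$, since $|v|^\gamma = R^{\gamma/2}(|v|^2/R)^{\gamma/2}$. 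This gives \eqref{e:LL-bound}.

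For the pointwise limits, split $f_R(v_*) = (1-p)M_1(v_*) + pM_R(v_*)$. In the hot part, substitute $v_* = \sqrt R\, w$, so that $M_R(v_*)\dd v_* = M_1(w)\dd w$. In both parts, divide by $R^{\gamma/2}\log R$ and pass to the limit by dominated convergence. The domination comes from $|\Delta\psi_R|/\log R \le C_p$ together with the $|v-v_*|^\gamma$ factors, normalized appropriately. The cold part uses the weight $M_1(v_*)(|v_R|+|v_*|)^\gamma/R^{\gamma/2}$, and the hot part uses $M_1(w)(|v_R|/\sqrt R+|w|)^\gamma$. Both dominating functions are integrable and converge, so the generalized dominated convergence theorem applies.

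Take first the case $v_R\to u$ (cold). In the cold part, $v_*$ is fixed and we have a cold--cold collision, so $\Delta\psi_R/\log R\to 0$ by \eqref{e:dichotomy}. Its prefactor is $|v_R-v_*|^\gamma/R^{\gamma/2}$, which also tends to $0$ when $\gamma>0$, so this part vanishes. In the hot part, $w\neq0$ a.e., so the collision is cold--hot. Then $-\Delta\psi_R/\log R\to\frac32$ for a.e.\ $\sigma$, and $|v_R-\sqrt R w|^\gamma/R^{\gamma/2}\to|w|^\gamma$. Integrating against $pM_1(w)$ gives $\frac32 p\,\kappa_\gamma$.

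Now take the case $v_R/\sqrt R\to y\ne0$ (hot). In the cold part, $v_*$ is fixed, so the collision is hot--cold. Its limit is $\frac32$ with weight $|v_R-v_*|^\gamma/R^{\gamma/2}\to|y|^\gamma$, giving $\frac32(1-p)|y|^\gamma$. In the hot part, for a.e.\ $w\neq y$ the collision is hot--hot, so the ratio tends to $0$. The exceptional set $\{w=y\}$ is null, and the weight is bounded by the dominating function. Hence this part vanishes.

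The main obstacle is justifying \eqref{e:dichotomy} pointwise in $(v_*,\sigma)$ from Lemma \ref{l:psi} for each configuration. In the cold--hot case, the two incoming $\psi$ values are $\frac32\log R+O_p(1)$ and $o(1)$, and both outgoing values are $o(1)$ by \eqref{e:hotout}, except on the null set where $y_\pm=0$, i.e.\ $\sigma=\pm\widehat z$. The cold--cold case requires the outgoing velocities to stay bounded; they do, since $|v'|\le|v|+|v_*|$. Then all four $\psi$ values are $\frac32\log R+O_p(1)$ and cancel to $O_p(1)$. The hot--hot case relies on \eqref{e:hothot}. With these a.e.\ limits and the uniform bound, dominated convergence closes the argument.
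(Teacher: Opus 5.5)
Your proposal is correct and follows the paper's proof essentially step for step. Both get the uniform bound from Lemma \ref{l:psi} and \eqref{e:nu-bound}, split $f_R=(1-p)M_1+pM_R$, rescale $v_*=\sqrt R\,w$ in the hot component, and obtain the two limits by dominated convergence using the dichotomy \eqref{e:dichotomy}, which you also justify case by case as the paper does just before the lemma.
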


\begin{proof}
Splitting $f_R$ into its two components, \eqref{e:LLpsi} reads
\begin{equation} \label{e:LL-split}
    \LL(v) = (1-p) \int_{\R^3} \int_{\mathbb{S}^2} M_1(u) \, |v-u|^\gamma
    \big( -\Delta \psi_R \big) \frac{\dd \sigma}{4\pi} \dd u
    + p \int_{\R^3} \int_{\mathbb{S}^2} M_R(u) \, |v-u|^\gamma
    \big( -\Delta \psi_R \big) \frac{\dd \sigma}{4\pi} \dd u ,
\end{equation}
where $\Delta \psi_R$ is computed from $v$, $u$ and $\sigma$ by
\eqref{e:vprime}.

For \eqref{e:LL-bound}, we use the bound \eqref{e:psi-bound} which gives
$|\Delta \psi_R| \leq 4 \big( \tfrac 32 \log R + O_p(1) \big) \leq C_p \log R$
regardless of the values of $v$, $v_*$, $v'$ and $v'_*$. Thus, \eqref{e:LL-split},
\eqref{e:nu} and \eqref{e:nu-bound} give
\[ |\LL(v)| \leq C_p \big( (1-p) \nu_1(v) + p \, \nu_R(v) \big) \log R
   \leq C_p \big( |v|^\gamma + \kappa_\gamma R^{\gamma/2} \big) \log R
   \leq C_p R^{\gamma/2}
   \left( 1 + \frac{|v|^2}{R} \right)^{\gamma/2} \log R .\]
In the second inequality we applied \eqref{e:nu-bound} with $T = 1$ and $T = R$,
together with $(1-p) + p R^{\gamma/2} \leq R^{\gamma/2}$; in the last inequality, we used that
$|v|^\gamma$ and $R^{\gamma/2}$ are both at most $\big( R + |v|^2 \big)^{\gamma/2}$.

For the two limits, substitute $u = \sqrt R \, w$ in the second integral of
\eqref{e:LL-split}, so that $M_R(u) \dd u = M_1(w) \dd w$, and divide by
$R^{\gamma/2} \log R$:
\[ \frac{\LL(v)}{R^{\gamma/2} \log R}
   = (1-p) \int\!\!\int M_1(u) \frac{|v-u|^\gamma}{R^{\gamma/2}}
     \frac{-\Delta \psi_R}{\log R} \frac{\dd \sigma}{4\pi} \dd u
   + p \int\!\!\int M_1(w) \Big| \frac{v}{\sqrt R} - w \Big|^\gamma
     \frac{-\Delta \psi_R}{\log R} \frac{\dd \sigma}{4\pi} \dd w . \]
Since $|\Delta \psi_R| \leq C_p \log R$, the two integrands are dominated by
constant multiples of $M_1(u) (1+|u|)^\gamma$ and $M_1(w) (1+|w|)^\gamma$, so
dominated convergence applies to both. If $v_R \to u_0$, the first collision is
cold--cold and $-\Delta \psi_R/\log R \to 0$ by \eqref{e:dichotomy}, so that
term
vanishes; in the second $|v_R/\sqrt R - w|^\gamma \to |w|^\gamma$ and
$-\Delta \psi_R/\log R \to \frac 32$, and the limit is
$\frac 32 p \int M_1 |w|^\gamma \dd w = \frac 32 p \kappa_\gamma$. If instead
$v_R/\sqrt R \to y \neq 0$, then $|v_R-u|^\gamma/R^{\gamma/2} \to |y|^\gamma$ and
the first collision is hot--cold, so that term tends to
$\frac 32 (1-p)|y|^\gamma$, while
the second is hot--hot, with distinct limits $y \neq w$ for almost every $w$,
and vanishes.
\end{proof}

In words: $\LL \approx \frac 32 p \, \kappa_\gamma R^{\gamma/2} \log R$ on the
cold scale and $\LL \approx \frac 32 (1-p) |v|^\gamma \log R$ on the hot scale. In the next lemma, we analyze how these asymptotics balance in the second term of \eqref{e:master} when we multiply $\LL$ by $A_R$ and integrate.
%A cold particle consumes cold mass only when it meets one of the $p$ hot ones, and a hot particle only when it meets one of the $1-p$ cold ones, each meeting weighted by the frequency at which it occurs.

\begin{lemma} \label{l:second}
As $R \to \infty$,
\[ \int_{\R^3} A_R \, \LL \dd v
   = \frac 34 \left( (1-p) \, \frac{2-\gamma}{2+\gamma} \, \kappa_{2\gamma}
   - p \, \kappa_\gamma^2 \right) R^\gamma \log R
   + o_p \big( R^\gamma \log R \big) . \]
\end{lemma}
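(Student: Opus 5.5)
The plan is to avoid working with $A_R$ pointwise and instead pair it with $\LL$ through the weak formulation \eqref{e:weak}. Applying \eqref{e:weak} to $Q(M_1,M_R)$ and to $Q(M_R,M_1)$, and exchanging $v \leftrightarrow v_*$ in one of them, gives, for any test function $\varphi$ of moderate growth,
\[ \int_{\R^3} A_R \, \varphi \dd v = \frac 12 \int_{\R^3}\int_{\R^3} M_1(v) M_R(v_*) |v-v_*|^\gamma \int_{\mathbb{S}^2} \big( \varphi' + \varphi'_* - \varphi - \varphi_* \big) \frac{\dd \sigma}{4\pi} \dd v_* \dd v . \]
The exchange also uses $\sigma \mapsto -\sigma$, which swaps $v'$ and $v'_*$. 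I will apply this with $\varphi = \LL$. This is legitimate because the bound \eqref{e:LL-bound} gives polynomial growth, so all integrals converge absolutely.

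Next I change variables $v_* = \sqrt R \, w$, so that $M_R(v_*) \dd v_* = M_1(w) \dd w$, and divide by $R^\gamma \log R$. The integrand becomes
\[ M_1(v) M_1(w) \, \frac{|v-\sqrt R w|^\gamma}{R^{\gamma/2}} \cdot \frac{\LL(v') + \LL(v'_*) - \LL(v) - \LL(\sqrt R w)}{R^{\gamma/2}\log R}. \]
I then pass to the limit pointwise for fixed $(v,w,\sigma)$ with $w \neq 0$:
\begin{itemize}
\item the first factor tends to $|w|^\gamma$;
\item since $v$ is fixed (cold), Lemma \ref{l:LL} gives $\LL(v)/(R^{\gamma/2}\log R) \to \frac 32 p\kappa_\gamma$;
\item since $\sqrt R w$ is hot, $\LL(\sqrt R w)/(R^{\gamma/2}\log R) \to \frac 32 (1-p)|w|^\gamma$;
\item by \eqref{e:hotout} with $z = w$, the outputs satisfy $v'/\sqrt R \to y_+$ and $v'_*/\sqrt R \to y_-$, both nonzero for a.e. $\sigma$, so Lemma \ref{l:LL} again gives limits $\frac 32(1-p)|y_\pm|^\gamma$.
\end{itemize}

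For domination, I use \eqref{e:LL-bound} together with $|v'|,|v'_*| \leq |v| + \sqrt R|w|$. This bounds every $\LL$-term divided by $R^{\gamma/2}\log R$ by $C_p(1+|v|+|w|)^\gamma$. The kernel factor is at most $(|v|+|w|)^\gamma$ for $R \geq 1$. Hence the integrand is dominated by $C_p M_1(v)M_1(w)(1+|v|+|w|)^{2}$, which is integrable, and dominated convergence applies.

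It remains to evaluate the limit
\[ \frac 34 \int\!\!\int M_1(v)M_1(w)|w|^\gamma \int_{\mathbb{S}^2} \Big[ (1-p)\big(|y_+|^\gamma + |y_-|^\gamma - |w|^\gamma\big) - p\kappa_\gamma \Big] \frac{\dd\sigma}{4\pi} \dd w \dd v . \]
With $t = \widehat w \cdot \sigma$, which has law $\frac 12\dd t$ on $[-1,1]$, and $|y_\pm| = |w|\sqrt{(1\pm t)/2}$, we get
\[ \int_{\mathbb{S}^2} |y_\pm|^\gamma \frac{\dd\sigma}{4\pi} = |w|^\gamma \int_0^1 s^{\gamma/2}\dd s = \frac{2}{2+\gamma}|w|^\gamma . \]
So the bracket averages to $(1-p)\frac{2-\gamma}{2+\gamma}|w|^\gamma - p\kappa_\gamma$. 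Integrating against $M_1(w)|w|^\gamma$ produces $\kappa_{2\gamma}$ and $\kappa_\gamma^2$, and the $v$-integral contributes $1$. This gives exactly the stated constant.

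The main obstacle I expect is the pointwise step. Lemma \ref{l:LL} is stated along sequences, so I must check that for fixed $(v,w,\sigma)$ the arguments $v'_R$ and $v'_{R*}$ genuinely form such sequences, with $v'_R/\sqrt R \to y_\pm \neq 0$. This holds off a null set of $\sigma$ and off $w = 0$, and I will verify it directly from \eqref{e:vprime}. The domination bound must also hold uniformly in $R \geq 2$, which the subadditivity argument above provides.
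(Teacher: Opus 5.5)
Your proposal is correct and follows the paper's proof step by step: the symmetrized weak form with $\varphi = \LL$, the substitution $v_* = \sqrt R\, w$, the pointwise limits from Lemma~\ref{l:LL} and \eqref{e:hotout}, domination via \eqref{e:LL-bound}, and the spherical average $\int_0^1 s^{\gamma/2}\dd s = \frac{2}{2+\gamma}$. The only cosmetic difference is that you dominate using the triangle inequality $|v'|,|v'_*| \leq |v| + \sqrt R\,|w|$ rather than energy conservation $|v'|^2+|v'_*|^2 = |v|^2 + R|w|^2$, and this gives the same integrable bound.
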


\begin{proof}
Since $A_R = \frac 12 \big( Q(M_1,M_R) + Q(M_R,M_1) \big)$, applying
\eqref{e:weak} with $\varphi = \LL$ to each of the two terms and symmetrizing in
$v$, $v_*$, $v'$ and $v'_*$ in the standard way gives
\[ \int_{\R^3} A_R \LL \dd v = \frac 12 \int_{\R^3} \int_{\R^3}
   \int_{\mathbb{S}^2} M_1(v) M_R(v_*)
   \big( \LL' + \LL'_* - \LL - \LL_* \big) B \dd \sigma \dd v_* \dd v . \]
Now $B = |v-v_*|^\gamma/(4\pi)$, and the substitution $v_* = \sqrt R \, w$ gives
$M_R(v_*) \dd v_* = M_1(w) \dd w$ and
$|v-v_*|^\gamma = R^{\gamma/2} \big| w - v/\sqrt R \big|^\gamma$. Dividing by
$R^\gamma \log R$,
\begin{equation} \label{e:weakA}
    \frac{1}{R^\gamma \log R} \int_{\R^3} A_R \LL \dd v
    = \frac 12 \int_{\R^3} \int_{\R^3} M_1(v) M_1(w)
    \Big| w - \frac{v}{\sqrt R} \Big|^\gamma \int_{\mathbb{S}^2}
    \frac{\LL' + \LL'_* - \LL(v) - \LL(\sqrt R w)}{R^{\gamma/2} \log R}
    \frac{\dd \sigma}{4\pi} \dd w \dd v ,
\end{equation}
where $v'$ and $v'_*$ are computed from $v$ and $\sqrt R w$ by
\eqref{e:vprime}.

Fix $v$, $w \neq 0$ and $\sigma$, and let $R \to \infty$. The velocity $v$ is on
the cold scale and $\sqrt R w$ is on the hot scale, and \eqref{e:hotout} applies
with $z = w$: the outgoing velocities satisfy $v'/\sqrt R \to y_+$ and
$v'_*/\sqrt R \to y_-$, with $|y_\pm| = |w| \big( \frac{1 \pm t}{2} \big)^{1/2}$
and $t = \widehat w \cdot \sigma$, and $y_\pm \neq 0$ for almost every $\sigma$.
The four limits of Lemma \ref{l:LL} therefore make the integrand of
\eqref{e:weakA} converge to
\[ \frac 32 \, M_1(v) M_1(w) |w|^\gamma
   \Big[ (1-p) \big( |y_+|^\gamma + |y_-|^\gamma \big)
   - p \, \kappa_\gamma - (1-p) |w|^\gamma \Big] . \]
Moreover $|w - v/\sqrt R|^\gamma \leq (|v|+|w|)^\gamma$, and \eqref{e:LL-bound}
together with $|v'|^2 + |v'_*|^2 = |v|^2 + R|w|^2$ bounds each of the four
quotients by $C_p (1+|v|^2+|w|^2)^{\gamma/2}$, so the integrand is dominated by
an integrable function of $(v,w)$. As the limit does not depend on $v$ and
$\int M_1 = 1$, dominated convergence gives
\begin{equation} \label{e:limit}
    \frac{1}{R^\gamma \log R} \int_{\R^3} A_R \LL \dd v \To
    \frac 34 \int_{\R^3} M_1(w) |w|^\gamma \int_{\mathbb{S}^2}
    \Big[ (1-p) \big( |y_+|^\gamma + |y_-|^\gamma \big)
    - p \, \kappa_\gamma - (1-p) |w|^\gamma \Big]
    \frac{\dd \sigma}{4\pi} \dd w .
\end{equation}
Two of the three integrals in \eqref{e:limit} do not involve $\sigma$; by the
definition \eqref{e:kappa} of $\kappa_s$ they equal $p \kappa_\gamma^2$ and
$(1-p) \kappa_{2\gamma}$. In the third,
$|y_\pm|^\gamma = |w|^\gamma \big( \tfrac{1 \pm t}{2} \big)^{\gamma/2}$. Since
$\int_{\mathbb{S}^2} g(\widehat w \cdot \sigma) \, \tfrac{\dd \sigma}{4\pi}
= \tfrac 12 \int_{-1}^1 g \dd t$ for every $g$,
\[ \int_{\mathbb{S}^2} \Big( \frac{1 \pm t}{2} \Big)^{\gamma/2}
   \frac{\dd \sigma}{4\pi}
   = \frac 12 \int_{-1}^1 \Big( \frac{1 \pm t}{2} \Big)^{\gamma/2} \dd t
   = \int_0^1 u^{\gamma/2} \dd u = \frac{2}{2+\gamma} , \]
so that the third integral is $(1-p) \frac{4}{2+\gamma} \kappa_{2\gamma}$.
Altogether the right hand side of \eqref{e:limit} equals
\[ \frac 34 \left[ (1-p) \kappa_{2\gamma}
   \left( \frac{4}{2+\gamma} - 1 \right) - p \, \kappa_\gamma^2 \right]
   = \frac 34 \left( (1-p) \, \frac{2-\gamma}{2+\gamma} \, \kappa_{2\gamma}
   - p \, \kappa_\gamma^2 \right) . \qedhere \]
\end{proof}

\section{Proof of Theorem \ref{t:main}}
\label{s:proof}

It remains to add the two estimates. By Lemma \ref{l:chi} the first term of
\eqref{e:master} is $O_p(R^\gamma) = o_p(R^\gamma \log R)$, and by
Lemma \ref{l:second} the second one is $2p(1-p)$ times the quantity computed
there. Their sum is \eqref{e:thm}, with
\[ c_{p,\gamma} = \frac 32 \, p (1-p) \left( (1-p) \, \frac{2-\gamma}{2+\gamma}
   \, \kappa_{2\gamma} - p \, \kappa_\gamma^2 \right) ,
   \qquad
   p_\gamma = \left( 1 + \frac{(2+\gamma) \, \kappa_\gamma^2}
                             {(2-\gamma) \, \kappa_{2\gamma}} \right)^{-1} , \]
the second formula being the rearrangement of $c_{p,\gamma} > 0$. \qed

\bibliographystyle{plain}
\bibliography{entropy-production}

\end{document}